\title{An It\^{o}'s type formula
for the fractional Brownian motion in Brownian time}
\author{Ivan Nourdin\footnote{Institut Elie Cartan, UMR 7502, Nancy Universit\'e - CNRS - INRIA; {\tt inourdin@gmail.com};
IN is supported in part by the (french) ANR grant `Malliavin, Stein and Stochastic Equations with Irregular Coefficients' [ANR-10-BLAN-0121]
} \ and Raghid Zeineddine\footnote{Institut Elie Cartan, UMR 7502, Nancy Universit\'e - CNRS - INRIA; {\tt raghid.zeineddine@univ-lorraine.fr}}}
\newtheorem{prop}{Proposition}[section]
\newtheorem{lemme}[prop]{Lemma}
\newtheorem{lemma}[prop]{Lemma}
\newtheorem{thm}[prop]{Theorem}
\renewcommand{\geq}{\geqslant}
\def\leq{\leqslant}
\newcommand{\N}{\mathbb{N}}
\newcommand{\Z}{\mathbb{Z}}
\newcommand{\R}{\mathbb{R}}
\def\1{{\mathbf{1}}}
\def\1{{\mathbf{1}}}
\def\0.5{{\frac{1}{2}}}
\newenvironment{proof}[1]{\begin{trivlist}\item {\it
\bf Proof.}\quad} {\qed\end{trivlist}}
\newcommand{\qed}{\nopagebreak\hspace*{\fill}
{\vrule width6pt height6ptdepth0pt}\par}
\begin{document}
\maketitle
\begin{abstract}
Let $X$ be a (two-sided) fractional Brownian motion of Hurst parameter $H\in (0,1)$ and let
$Y$ be a standard Brownian motion independent of $X$. Fractional Brownian motion in Brownian motion time (of index $H$), recently studied in \cite{13}, is by definition the process $Z=X\circ Y$. It is a continuous, non-Gaussian process with stationary increments, which is selfsimilar of index $H/2$.
The main result of the present paper is an It\^{o}'s type formula for $f(Z_t)$, when $f:\R\to\R$ is smooth and $H\in [ 1/6,1)$.
When $H>1/6$, the change-of-variable formula we obtain is similar to that of the classical calculus.
In the critical case $H=1/6$, our change-of-variable formula is in law and involves the third derivative of $f$ as well as an extra Brownian motion independent of the pair $(X,Y)$. We also discuss briefly the case $H<1/6$.
\end{abstract}
 \textbf{Keywords:} Fractional Brownian motion in Brownian time; change-of-variable formula in law; Malliavin calculus.

\section {Introduction}

If $f:\R_+\to\R$ is $C^1$ then
$f(t)=f(0)+\int_0^t f'(s)ds$ for all $t\geq 0$ whereas, if $W$ is a standard Brownian motion and if $f:\R\to \R$ is $C^2$ then, by the It\^o's formula,
\begin{equation}\label{ito}
f(W_t)=f(0)+\int_0^t f'(W_s)d^-W_s + \frac{1}{2}\int_0^t f''(W_s)ds,\quad t\geq 0.
\end{equation}
In (\ref{ito}) the It\^o integral, namely
\begin{equation}\label{forward}
\int_0^t X_sd^-Y_s := \lim_{n\to\infty}\sum_{k=0}^{\lfloor 2^n t\rfloor -1}
X_{k2^{-n}}(Y_{(k+1)2^{-n}}-Y_{k2^{-n}}),
\end{equation}
is of forward type.
It is well-known that the additional bracket term $\frac{1}{2}\int_0^t f''(W_s)ds$ appearing in (\ref{ito}) comes from
the non-negligibility of the quadratic variation of $W$ in the large limit; more precisely,
\begin{equation}\label{quadratic}
\sum_{k=0}^{\lfloor 2^n t\rfloor -1} (W_{(k+1)2^{-n}}-W_{k2^{-n}})^2
\,\,\,
\overset{\rm a.s.}{\longrightarrow}\,\,\,t\quad\mbox{as $n\to\infty$}.
\end{equation}

Introducing a family $\{B^H\}_{H\in(0,1)}$ of fractional Brownian motions parametrized by the Hurst parameter $H$ may help to
reinterpret (\ref{ito}) in a more dynamical way. Let us elaborate this point of view further. Recall that $B^{\frac12}$ is nothing but the standard Brownian motion, whereas
$B^1$ is the process $B^1_t=tG$, $t\geq 0$, $G\sim N(0,1)$. The extension
of (\ref{quadratic}) to any $H\in(0,1)$ is well-known: one has
\begin{equation}\label{quadratic-fbm}
2^{n(2H-1)}\sum_{k=0}^{\lfloor 2^n t\rfloor -1} (B^H_{(k+1)2^{-n}}-B^H_{k2^{-n}})^2\,\,\,
\overset{\rm a.s.}{\longrightarrow}\,\,\,t\quad\mbox{as $n\to\infty$}.
\end{equation}
Based on (\ref{quadratic-fbm}), it is then not difficult to prove the following two facts:
\begin{enumerate}
\item If $H>\frac12$ and $f:\R\to\R$ is $C^2$ (actually, $C^1$ is enough), then $\int_0^t f'(B^H_s)d^-B^H_s$ exists
as a limit in probability and we have
\[
f(B^H_t)=f(0)+\int_0^t f'(B^H_s)d^-B^H_s,\quad t\geq 0.
\]
\item If $H<\frac12$, then
\[
\int_0^t B^H_s d^-B^H_s = -\infty\quad\mbox{a.s.},
\]
meaning that there is no possible change-of-variable formula for
$f(x)=x^2$.
\end{enumerate}
Thus, $H=\frac12$ appears to be a critical value for the change-of-variable formula involving the forward integral (\ref{forward}). This is because it is precisely the value from which the sign of $2H-1$ changes in (\ref{quadratic-fbm}).
The chain rule being (\ref{ito}) in the critical case $H=\frac12$, one has a complete picture for the forward integral (\ref{forward}).\\

To go one step further, one may wonder what kind of change-of-variable formula one would obtain after replacing the definition (\ref{forward}) by
its symmetric counterpart, namely
\begin{equation}\label{symmetric}
\int_0^t X_sd^\circ Y_s := \lim_{n\to\infty} \sum_{k=0}^{\lfloor 2^n t\rfloor -1}
\frac12\big(X_{k2^{-n}}+X_{(k+1)2^{-n}}\big)(Y_{(k+1)2^{-n}}-Y_{k2^{-n}})
\end{equation}
(provided the limit exists in some sense).  As it turns out, it is arguably a much more difficult problem, which has been solved only recently.
In this context, the crucial quantity is now the cubic variation. And this latter is known to
satisfy, for any $H<\frac12$,
\begin{equation}\label{cubic-fbm}
2^{n(3H-\frac12)}\sum_{k=0}^{\lfloor 2^n t\rfloor -1} (B^H_{(k+1)2^{-n}}-B^H_{k2^{-n}})^3\,\,\,
\overset{\rm law}{\to}\,\,\,N(0,\sigma^2_H)\quad\mbox{as $n\to\infty$}.
\end{equation}
With a lot of efforts, one can prove (see \cite{CN,10} when $H\neq \frac16$ and \cite{11} when $H=\frac16$) the following three facts, which hold for any smooth enough real function $f:\R\to\R$:
\begin{enumerate}
\item If $H>\frac16$ then
$\int_0^t f'(B^H_s)d^\circ B^H_s$ exists
as a limit in probability and one has
\begin{equation}\label{hgrand}
f(B^H_t)=f(0)+\int_0^t f'(B^H_s)d^\circ B^H_s,\quad t\geq 0.
\end{equation}
\item If $H=\frac16$ then $\int_0^t f'(B^\frac16_s)d^\circ B^\frac16_s$ exists
as a stable limit in law and one has, with $W$ a standard Brownian motion independent of $B^\frac16$ and $\kappa_3\simeq 2.322$,
\begin{equation}\label{h16}
f(B^\frac16_t)=f(0)+\int_0^t f'(B^\frac16_s)d^\circ B^\frac16_s-\frac{\kappa_3}{12}\int_0^t f'''(B^\frac16)dW_s,\quad t\geq 0.
\end{equation}
\item If $H<\frac16$ then
\begin{equation}\label{hpetit}
\int_0^t (B^H_s)^2 d^\circ B^H_s \mbox{ does not exist in law}.
\end{equation}
\end{enumerate}
Thus, as we see, the critical value for the symmetric integral is now $H=\frac16$; it is exactly the value of $H$ from which the sign of $3H-\frac12$ changes in (\ref{cubic-fbm}).\\

In \cite{1,2} (see also \cite{3}), Burdzy has introduced the so-called iterated Brownian motion.
This process, which can be regarded as the realization of a Brownian motion on a
random fractal, is defined as
\[
Z_t=X(Y_t), \quad t\geq 0,
\]
where $X$ is a two-sided Brownian motion and $Y$ is a standard (one-sided) Brownian
motion independent of $X$.
Note that $Z$ is self-similar of order $\frac14$ and has stationary increments; hence, in some sense, $Z$ is close to the fractional Brownian motion $B^\frac14$ of index $H=\frac14$. As is the case for $B^\frac14$,
$Z$ is neither a Dirichlet process nor a semimartingale or a Markov process in its own filtration. A crucial question is therefore how to define a stochastic calculus with respect to it. This issue has been tackled by Khoshnevisan and Lewis in \cite{5,6}, where the authors develop a Stratonovich-type stochastic calculus with respect to $Z$, by extensively using techniques based on the properties of some special arrays of Brownian stopping times, as well as on excursion-theoretic arguments. See also the paper \cite{NPejp} which may be seen as a follow-up of \cite{5}.
The formula obtained in \cite{5,6} reads, unsurprisingly (due to (\ref{hgrand}) and the similarities between $Z$ and $B^\frac14$) and losely speaking, as follows:
\begin{equation}\label{KL}
f(Z_t)=f(0)+\int_0^t f(Z_s)d^\circ Z_s,\quad t\geq 0.
\end{equation}

The change-of-variable formula (\ref{KL})  is of the same kind  than (\ref{hgrand}). In view of what has been done so far for the fractional Brownian motion $B^H$, aiming to provide an answer to the following problem is somehow natural: can we also reinterpret (\ref{KL}) in a dynamical way, in the spirit of (\ref{hgrand}), (\ref{h16}) and (\ref{hpetit})? To this end, we first need to introduce
a family of processes  that contains the iterated Brownian motion $Z$ as a particular element. The family consisting in the so-called fractional Brownian motions in Brownian time, studied  in \cite{13} by the second-named author, does the job. More specifically, it is the family
$\{Z^H\}_{H\in (0,1)}$ defined as follows:
\[
Z^H_t=X^H(Y_t), \quad t\geq 0,
\]
where $X^H$ is a two-sided fractional Brownian motion of index $H$ and $Y$ is a standard (one-sided) Brownian
motion independent of $X$.
Roughly speaking, in the present paper we are going to show the following three assertions (see Theorem \ref{main} for a precise statement):
for any smooth real function $f:\R\to\R$,
\begin{enumerate}
\item If $H>\frac16$ then
\[
f(Z^H_t)=f(0)+\int_0^t f'(Z^H_s)d^\circ Z^H_s,\quad t\geq 0.
\]
\item If $H=\frac16$ then, with $W$ a standard Brownian motion independent of the pair $(X^\frac16,Y)$ and $\kappa_3\simeq 2.322$,
\begin{equation}\label{h=16}
f(Z^\frac16_t)=f(0)+\int_0^t f'(Z^\frac16_s)d^\circ Z^\frac16_s-\frac{\kappa_3}{12}\int_0^{Y_t} f'''(X^\frac16_s)dW_s,\quad t\geq 0.
\end{equation}
\item If $H<\frac16$, then
\[
\int_0^t (Z^H_s)^2 d^\circ Z^H_s \mbox{ does not exist}.
\]
\end{enumerate}

The formula (\ref{h=16}) is related to a recent line of research in which, by means of Malliavin calculus, one aims to exhibit change-of-variable formulas in law with a correction term which is an It\^o integral with respect to martingale independent of the underlying Gaussian processes. Papers dealing with this problem and which are prior to our work include
\cite{BS,HN1,12,HN3,nourdinjfa,NR,11}; however, it is worthwhile noting that all these mentioned references only deal with Gaussian processes, not with iterated processes (which are arguably more difficult to handle).\\

   A brief outline of the paper is as follows.
   In Section 2, we introduce the framework in which our study takes place and we provide an exact statement of our result, namely Theorem \ref{main}.
   Finally, Section 3 contains the proof of Theorem \ref{main}, which is divided into several steps.

\section{Framework and exact statement of our results}\label{framework}
For simplicity, throughout the paper we remove the superscript $H$, that is, we write $Z$ (resp. $X$) instead of $Z^H$ (resp. $X^H$).

Let $Z$ be a fractional Brownian motion in Brownian time of Hurst parameter $H\in(0,1)$,
defined as
\begin{equation}\label{fbmbt}
Z_t=X(Y_t), \quad t\geq 0,
\end{equation}
where $X$ is a two-sided fractional Brownian motion of parameter $H$ and $Y$ is a standard (one-sided) Brownian
motion independent of $X$.

The paths of $Z$ being very irregular (precisely: H\"older continuous of order $\alpha$ if and only if $\alpha$ is strictly less than $H/2$), we will  not be able to define a stochastic integral with respect to it as the limit of Riemann sums with respect to a {\it deterministic} partition of the time axis. However, a winning idea borrowed from Khoshnevisan and Lewis \cite{5,6} is to approach deterministic partitions by means of random partitions defined in terms of hitting times of the underlying Brownian motion $Y$. As such, one can bypass the random ``time-deformation'' forced by (\ref{fbmbt}), and perform asymptotic procedures by separating the roles of $X$ and $Y$ in the overall definition of $Z$.

Following Khoshnevisan and Lewis \cite{5,6}, we start by introducing the so-called intrinsic skeletal structure of $Z^H$. This structure is defined through a sequence of collections of stopping times (with
respect to the natural filtration of $Y$), noted
\begin{equation}
\mathscr{T}_n=\{T_{k,n}: k\geq0\}, \quad n\geq1, \label{TN}
\end{equation}
which are in turn expressed in terms of the subsequent hitting
times of a dyadic grid cast on the real axis. More precisely, let
$\mathscr{D}_n= \{j2^{-n/2}:\,j\in\Z\}$, $n\geq 1$, be the dyadic
partition (of $\R$) of order $n/2$. For every $n\geq 1$, the
stopping times $T_{k,n}$, appearing in (\ref{TN}), are given by
the following recursive definition: $T_{0,n}= 0$, and
\[
T_{k,n}= \inf\big\{s>T_{k-1,n}:\quad
Y(s)\in\mathscr{D}_n\setminus\{Y(T_{k-1,n})\}\big\},\quad k\geq 1.
\]
Note that the definition of $T_{k,n}$, and
therefore of $\mathscr{T}_n$, only involves the one-sided Brownian
motion $Y$, and that, for every $n\geq1$, the discrete stochastic
process
\[
\mathscr{Y}_n=\{Y(T_{k,n}):k\geq0\}
\]
defines a simple random
walk over $\mathscr{D}_n$.  As shown in
\cite[Lemma 2.2]{5}, as $n$ tends to
infinity the collection $\{T_{k,n}:\,1\leq k \leq 2^nt\}$ approximates the
common dyadic partition $\{k2^{-n}:\,1\leq k \leq 2^nt\}$ of order $n$ of the time interval $[0,t]$. More precisely,
\begin{equation}\label{lemma2.2}
\sup_{0\leq s\leq t} \big| T_{\lfloor 2^n s\rfloor,n}-s\big|\to 0\quad\mbox{almost surely and in $L^2(\Omega)$.}
\end{equation}
Based on this fact, one may introduce the counterpart of (\ref{symmetric}) based on $\mathscr{T}_n$, namely,
\begin{equation}
V_n(f, t)= \sum_{k=0}^{\lfloor 2^n t \rfloor -1} \frac{1}{2}\big(f(Z_{T_{k,n}})+f(Z_{T_{k+1,n}})\big)(Z_{T_{k+1,n}}-Z_{T_{k,n}}).\label{v}
\end{equation}
Let $C_b^{\infty}$ denote the class of those functions  $f:\R\to\R$ that are $C^{\infty}$ and bounded together with their derivatives. We then have the following result.
\begin{thm}\label{main} Let $f \in C_b^{\infty}$ and $t>0$.
\begin{enumerate}
\item If $ H>\frac16$ then
\begin{eqnarray}
 f(Z_t) - f(0) = \int_0^{t}f'(Z_s)d^\circ Z_s,\label{Ito1}
\end{eqnarray}
where $\int_0^{t}f'(Z_s)d^\circ Z_s$ is the limit in probability of $V_n(f', t)$ defined in (\ref{v}) as $n\to\infty$.
\item If $H =\frac16$ then, with $\kappa_3\simeq 2.322$,
\begin{eqnarray}
 f(Z_t) - f(0) + \frac{\kappa_3}{12}\int_0^{Y_{t}}f'''(X_s)dW_s  &\overset{\rm (law)}{=}& \int_0^{t}f'(Z_s)d^{\circ}Z_s. \label{Ito2}
\end{eqnarray}
Here, $\int_0^{t}f'(Z_s)d^{\circ}Z_s$ denotes the limit in law of $V_n(f', t)$ defined in (\ref{v}) as $n\to\infty$
(its existence is part of the conclusion), $W$ is a two-sided Brownian motion independent of the pair $(X,Y)$ defining $Z$,
and the integral with respect to $W$ is understood in the Wiener-It\^o sense.
\item If $H<\frac16$ then
\begin{equation}\label{Ito3}
V_n(x\mapsto x^2,t)\mbox{ does not
converge, even stably in law.}
\end{equation}
This means that there is no way to get a change-of-variable formula for $f(x)=x^3$.
\end{enumerate}
\end{thm}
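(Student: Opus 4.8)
The plan is to reduce all three statements to the asymptotics of a single object — the weighted cubic variation of $Z$ along $\mathscr{T}_n$ — and then to import the fractional-Brownian results (\ref{cubic-fbm}) and (\ref{h16}) by conditioning on the Brownian clock $Y$. First I would telescope $f(Z_{T_{\lfloor 2^nt\rfloor,n}})-f(0)=\sum_k\big(f(Z_{T_{k+1,n}})-f(Z_{T_{k,n}})\big)$ and, writing $\Delta_k Z=Z_{T_{k+1,n}}-Z_{T_{k,n}}$ and $m_k=\tfrac12(Z_{T_{k,n}}+Z_{T_{k+1,n}})$, expand each increment symmetrically about the midpoint $m_k$, so that even-order terms cancel and one is left with the exact identity
\begin{equation*}
f(Z_{T_{\lfloor 2^nt\rfloor,n}})-f(0)-V_n(f',t)=-\frac{1}{12}\sum_{k=0}^{\lfloor 2^nt\rfloor-1}f'''(m_k)(\Delta_k Z)^3+R_n,
\end{equation*}
where $R_n$ collects only odd-order remainders ($(\Delta_k Z)^5$ and higher). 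By (\ref{lemma2.2}) and continuity $Z_{T_{\lfloor 2^nt\rfloor,n}}\to Z_t$, so everything hinges on the limit of the weighted cubic sum (and on showing $R_n\to0$).

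The decisive structural fact is that $\{Y(T_{k,n})\}_k$ is a simple random walk on the grid $\mathscr{D}_n$ of spacing $2^{-n/2}$. Hence, conditionally on $Y$, each $\Delta_k Z$ is $\pm$ an increment of the fractional Brownian motion $X$ across a single grid edge, the sign being the direction $\varepsilon_k=\pm1$ of the walk; since $\varepsilon_k^3=\varepsilon_k$, grouping crossings edge by edge rewrites $\sum_k(\Delta_k Z)^3$ as a sum over edges weighted by the net signed crossing number, which in the unweighted case telescopes exactly to the cubic variation of $X$ over $[0,Y_t]$ at mesh $2^{-n/2}$. This is precisely the quantity governed by (\ref{cubic-fbm}): after multiplication by $2^{(n/2)(3H-\frac12)}$ it is asymptotically Gaussian, so conditionally on $Y$ the unnormalised cubic variation has size of order $2^{(n/2)(\frac12-3H)}$. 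This one exponent produces the whole trichotomy: it tends to $0$ for $H>\tfrac16$, is $O(1)$ and converges in law for $H=\tfrac16$, and diverges for $H<\tfrac16$.

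For $H>\tfrac16$ I would show the weighted cubic sum and $R_n$ both tend to $0$ in probability, so $V_n(f',t)\to f(Z_t)-f(0)$ and (\ref{Ito1}) follows. For $H=\tfrac16$ the exponent vanishes and I would instead prove that the weighted cubic sum converges \emph{stably} in law, jointly with $(X,Y)$, to a conditionally Gaussian limit; transporting (\ref{h16}) through the time change $t\mapsto Y_t$ identifies it as $\tfrac{\kappa_3}{12}\int_0^{Y_t}f'''(X_s)\,dW_s$ for a Brownian motion $W$ independent of $(X,Y)$, which gives (\ref{Ito2}). For $H<\tfrac16$ I would specialise to $f(x)=x^2$ and use the elementary identity $\tfrac12(a^2+b^2)(b-a)=\tfrac13(b^3-a^3)+\tfrac16(b-a)^3$ to obtain $V_n(x\mapsto x^2,t)=\tfrac13 Z_{T_{\lfloor 2^nt\rfloor,n}}^3+\tfrac16\sum_k(\Delta_k Z)^3$; since conditionally on $Y$ the cubic variation has conditional standard deviation of order $2^{(n/2)(\frac12-3H)}\to\infty$, no deterministic renormalisation can make it converge, which rules out convergence even stably in law and yields (\ref{Ito3}).

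The main obstacle is the genuinely weighted cubic variation, i.e.\ when $f'''$ is not constant. Naively replacing $f'''(m_k)$ by the value of $f'''$ at a fixed endpoint of each edge — and similarly bounding $R_n$ — generates error sums of the type $\sum_k(\Delta_k Z)^4$, which are of order $2^{n(1-2H)}$ and therefore \emph{diverge} for every $H<\tfrac12$; crude absolute-value estimates are thus hopeless. These errors must be controlled through their internal sign cancellations, and this is exactly where Malliavin calculus is needed: conditionally on $Y$ one expands the relevant sums in the Wiener chaos of $X$ and bounds them via contraction and fourth-moment estimates (the machinery underlying (\ref{cubic-fbm}) and (\ref{h16})), and then integrates out $Y$ using the local-time asymptotics of the random walk $\{Y(T_{k,n})\}$. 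The most delicate step is establishing \emph{stable} — rather than merely distributional — convergence at $H=\tfrac16$, since only stable convergence justifies representing the correction as an It\^o integral against a Brownian motion $W$ independent of the pair $(X,Y)$.
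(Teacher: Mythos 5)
Your proposal is correct in outline and follows essentially the same route as the paper: telescoping plus a symmetric Taylor expansion, the Khoshnevisan--Lewis random-walk/crossing structure to separate $X$ from $Y$ (the paper's Steps 1--2), the weighted odd-power/Hermite variation asymptotics for fractional Brownian motion imported via conditioning on the independent clock $Y$ (its Steps 3--5, with a moment bound to replace $Y_{T_{\lfloor 2^n t\rfloor,n}}$ by $Y_t$), and the identity $\tfrac12(a^2+b^2)(b-a)=\tfrac13(b^3-a^3)+\tfrac16(b-a)^3$ together with the divergence of the normalized cubic variation for the case $H<\tfrac16$. The only notable divergence is your expansion about the midpoint $m_k$: the paper instead uses a trapezoidal Taylor formula whose cubic and higher-order coefficients are endpoint averages $\tfrac12\big(f^{(2r-1)}(a)+f^{(2r-1)}(b)\big)$, so that every term is exactly a weighted variation $V_n^{(2r-1)}(f,t)$ to which the crossing lemma applies verbatim, thereby avoiding the midpoint-replacement errors of type $\sum_k(\Delta_k Z)^4$ that you correctly flag as dangerous and propose to control with additional chaos-expansion work.
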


  \section{ Proof of Theorem \ref{main}}

  \subsection{Elements of Malliavin calculus}
  In this section, we gather some elements of Malliavin calculus we shall need thoughout the proof of Theorem \ref{main}. The reader already familiar with this topic may skip this section.

We continue to denote by  $X = (X_{t})_{t \in \R}$ a two-sided fractional Brownian motion with Hurst parameter $ H \in (0,1).$ That is, $X$ is a zero mean Gaussian process, defined on a complete probability space $(\Omega, \mathscr{A}, P)$, with the covariance function \[ C_{H}(t,s) = E(X_{t}X_{s})=\frac{1}{2}(|s|^{2H} + |t|^{2H} -|t-s|^{2H}),\text{\:\:\:} s,t \in \R.\]
   We suppose that $\mathscr{A}$ is the $\sigma$-field generated by $X$. For all $n \in \N^*$, we let $\mathscr{E}_n$ be the set of step functions on $[-n,n]$, and $\displaystyle{\mathscr{E}:= \cup_n \mathscr{E}_n}$. Set $\xi_t = \textbf{1}_{[0,t]}$ (resp. $\textbf{1}_{[t,0]}$) if $t \geq 0$ (resp. $t < 0$). Let $\mathscr{H}$ be the Hilbert space defined as the closure of $\mathscr{E}$ with respect to the inner product
   \[
    \langle \xi_t, \xi_s \rangle_{\mathscr{H}} = C_{H}(t,s),\quad s,t \in \R.
\]
     The mapping $\xi_t \mapsto X_{t}$ can be extended to an isometry between $\mathscr{H}$ and the Gaussian space $\mathbb{H}_{1}$ associated with $X$. We will denote this isometry by $\varphi \mapsto X(\varphi).$

     Let $\mathscr{F}$ be the set of all smooth cylindrical random variables, i.e. of the form
   \[ F = \phi (X_{t_{1}},...,X_{t_{l}}),\] where $l \in \N^*$, $\phi : \mathbb{R}^{l}\rightarrow \mathbb{R}$ is $C_{b}^{\infty}$ and  $ t_{1} < . . . <t_{l}$ are some real numbers. The derivative of $F$ with respect to $X$ is the element of $L^{2}(\Omega, \mathscr{H})$ defined by \[ D_{s}F = \sum_{i=1}^{l}\frac{\partial\phi}{\partial x_{i}}(X_{t_{1}}, ...,X_{t_{l}})\xi_{t_{i}}(s), \text{\: \: \:} s \in \R.\]In particular $D_{s}X_{t} = \xi_t(s)$. For any integer $k \geq 1$, we denote by $\mathbb{D}^{k,2}$ the closure of the set of smooth random variables with respect to the norm
   \[ \|F\|_{k,2}^{2} = E(F^{2}) + \sum_{j=1}^{k} E[ \|D^{j}F\|_{\mathscr{H}^{\otimes j}}^{2}].\]
   The Malliavin derivative $D$ satisfies the chain rule. If $\varphi : \mathbb{R}^{n} \rightarrow \mathbb{R}$ is $C_{b}^{1}$ and if $F_1,\ldots,F_n$ are in $\mathbb{D}^{1,2}$, then $\varphi(F_{1},...,F_{n}) \in \mathbb{D}^{1,2}$ and we have
   \[ D\varphi(F_{1},...,F_{n}) = \sum_{i=1}^{n} \frac{\partial \varphi}{\partial x_{i}}(F_{1},...,F_{n})DF_{i}.\]
   We have the following Leibniz formula, whose proof is straightforward  by induction on $q$. Let $\varphi, \psi \in C_{b}^{q}$ $(q\geq 1)$, and fix $0 \leq u<v $ and $0\leq s<t .$ Then $\varphi(X_{t}-X_{s})\psi(X_{v}-X_{u})\in \mathbb{D}^{q,2}$ and
   \begin{eqnarray}
D^{q}\big( \varphi(X_{t}-X_{s})\psi(X_{v}-X_{u})\big) = \sum_{a=0}^{2q} \binom{2q}{a} \varphi^{(a)}(X_{t}-X_{s})\psi^{(2q-a)}(X_{v}-X_{u})\textbf{1}_{[s,t]}^{\otimes a}\tilde{\otimes}\textbf{1}_{[u,v]}^{\otimes (2q-a)},\notag\\
\label{Leibnitz1}
   \end{eqnarray}
   where $\tilde{\otimes}$ stands for the symmetric tensor product. A similar statement holds fo $ u<v \leq 0 $ and $ s<t \leq 0$.

   If a random element $u \in L^{2}(\Omega, \mathscr{H})$ belongs to the domain of the divergence operator, that is, if it satisfies
   \[ |E\langle DF,u\rangle_{\mathscr{H}}|\leq c_{u}\sqrt{E(F^{2})} \text{\: for  any\:} F\in \mathscr{F},\] then $I(u)$ is defined by the duality relationship
\[
E \big( FI(u)\big) = E \big( \langle DF,u\rangle_{\mathscr{H}}\big),
\]
for every $F \in \mathbb{D}^{1,2}.$

For every $n\geq 1$, let $\mathbb{H}_{n}$ be the $n$th Wiener chaos of $X$, that is, the closed linear subspace of $ L^{2}(\Omega, \mathscr{A},P)$ generated by the random variables $\lbrace H_{n}(B(h)), h \in \mathscr{H}, \|h\|_{\mathscr{H}}=1 \rbrace,$ where $H_{n}$ is the $n$th Hermite polynomial. The mapping
$I_{n}(h^{\otimes n}) = H_{n}(B(h))$
 provides a linear isometry between the symmetric tensor product $\mathscr{H}^{\odot n}$ and $\mathbb{H}_{n}$. For $H =\frac{1}{2}$, $I_{n}$ coincides with the multiple Wiener-It\^{o} integral of order $n$. The following duality formula holds
\begin{eqnarray}\label{10}
  E \big( FI_{n}(h)\big) = E \big( \langle D^{n}F,h\rangle_{\mathscr{H}^{\otimes n}}\big),
  \end{eqnarray}
  for any element $ h\in \mathscr{H}^{\odot n}$ and any random variable $F \in \mathbb{D}^{n,2}.$

  Let $\lbrace e_{k}, k \geq 1\rbrace $ be a complete orthonormal system in $\mathscr{H}.$ Given $f \in \mathscr{H}^{\odot n}$ and $g \in \mathscr{H}^{\odot m},$ for every $r= 0,...,n\wedge m,$ the contraction of $f$ and $g$ of order $r$ is the element of $ \mathscr{H}^{\otimes(n+m-2r)}$ defined by
  \[ f\otimes_{r} g = \sum_{k_{1},...,k_{r} =1}^{\infty} \langle f, e_{k_{1}}\otimes...\otimes e_{k_{r}}\rangle_{\mathscr{H}^{\otimes r}}\otimes \langle g,e_{k_{1}}\otimes...\otimes e_{k_{r}}\rangle_{\mathscr{H}^{\otimes r}}.\]
 Note that $f\otimes_{r} g$ is not necessarily symmetric: we denote its symmetrization by $f\tilde{\otimes}_{r} g \in \mathscr{H}^{\odot (n+m-2r)}.$ Finally, we recall the following product formula: if $f\in \mathscr{H}^{\odot n}$ and $g\in \mathscr{H}^{\odot m}$ then
  \begin{eqnarray}\label{11}
  I_{n}(f)I_{m}(g)= \sum_{r=0}^{n\wedge m} r! \binom{n}{r}\binom{m}{r} I_{n+m-2r}(f\tilde{\otimes}_{r}g).
  \end{eqnarray}

 \subsection{Notation and reduction of the problem}
  Throughout all the proof, we shall use the following notation.
For all $k,n \in \N$ we write
\begin{eqnarray*}
\xi_{k2^{-n/2}} &=&\textbf{1}_{[0,k2^{-n/2}]},\quad \xi_{k2^{-n/2}}^-=\textbf{1}_{[-k2^{-n/2},0]},\\
\delta_{k2^{-n/2}} &=& \textbf{1}_{[(k-1)2^{-n/2},k2^{-n/2}]},\quad
\delta_{k2^{-n/2}}^- = \textbf{1}_{[-k2^{-n/2},(-k+1)2^{-n/2}]}.
\end{eqnarray*}
Also, $\langle\cdot,\cdot\rangle$ ($\|\cdot\|$, respectively) will always stand for inner product (the norm, respectively) in an appropriate tensor product $\mathscr{H}^{\otimes s}$.\\

 In the sequel,  we only consider the case $H<\frac12$. The proof of (\ref{Ito1}) in the case $H>\frac12$ is easier and left to the reader,
  whereas the proof when $H=\frac12$ was already
done in \cite{5,6} by Khoshnevisan and Lewis. \\

That said, we now divide the proof of Theorem \ref{main} in several steps.

\subsection{\underline{Step 1}: A key algebraic lemma}
For each
integer $n\geq 1$, $k\in\Z$ and real number $t\geq 0$, let $U_{j,n}(t)$ (resp.
$D_{j,n}(t)$) denote the number of \textit{upcrossings} (resp.
\textit{downcrossings}) of the interval
$[j2^{-n/2},(j+1)2^{-n/2}]$ within the first $\lfloor 2^n
t\rfloor$ steps of the random walk $\{Y(T_{k,n})\}_{k\geq 1}$, that is,
\begin{eqnarray}
U_{j,n}(t)=\sharp\big\{k=0,\ldots,\lfloor 2^nt\rfloor -1 :&&
\notag
\\ Y(T_{k,n})\!\!\!\!&=&\!\!\!\!j2^{-n/2}\mbox{ and }Y(T_{k+1,n})=(j+1)2^{-n/2}
\big\}; \notag\\
D_{j,n}(t)=\sharp\big\{k=0,\ldots,\lfloor 2^nt\rfloor -1:&&
\notag
\\ Y(T_{k,n})\!\!\!\!&=&\!\!\!\!(j+1)2^{-n/2}\mbox{ and }Y(T_{k+1,n})=j2^{-n/2}
\big\}.\notag
\end{eqnarray}
While easy, the following lemma taken from \cite[Lemma 2.4]{5} is going to be the key when
studying the asymptotic behavior of the weighted power variation $V_n^{(2r-1)}(f,t)$ of {\it odd} order $2r-1$, defined as:
\[
V_n^{(2r-1)}(f,t)=\sum_{k=0}^{\lfloor 2^n t \rfloor -1} \frac12\big(f(Z_{T_{k,n}})+f(Z_{T_{k+1,n}})\big)(Z_{T_{k+1,n}}-Z_{T_{k,n}})^{2r-1},\quad t\geq 0.
\]
Its main feature is to separate $X$ from $Y$, thus providing a representation of
$V_n^{(2r-1)}(f,t)$ which is amenable to analysis.

\begin{lemme}\label{lm-kl}
Fix $f\in C^\infty_b$, $t\geq 0$ and $r\in \N^{*}$.
Then
\begin{eqnarray}
&&V_n^{(2r-1)}(f,t)\notag\\
&=& \sum_{j\in\Z}
\frac12\left(
f(X_{j2^{-\frac{n}{2}}}) + f(X_{(j+1)2^{-\frac{n}{2}}})\right) \big(X_{(j+1)2^{-\frac{n}2}}-
X_{j2^{-\frac{n}{2}}}\big)^{2r-1} \notag
\big(U_{j,n}(t)-D_{j,n}(t)\big).
\end{eqnarray}
\end{lemme}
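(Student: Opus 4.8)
The plan is to prove Lemma \ref{lm-kl} by regrouping the sum defining $V_n^{(2r-1)}(f,t)$ according to the spatial interval of $\mathscr{D}_n$ that is crossed at each step, rather than according to the temporal index $k$. The starting observation is that, by the very definition of the stopping times $T_{k,n}$, the process $\{Y(T_{k,n})\}_{k\geq 0}$ is a simple random walk on $\mathscr{D}_n$, so at each step $k$ the increment $Y(T_{k+1,n})-Y(T_{k,n})$ equals exactly $\pm 2^{-n/2}$, and the two consecutive values $Y(T_{k,n})$, $Y(T_{k+1,n})$ are always two adjacent points of the grid $\mathscr{D}_n$. Consequently, since $Z_{T_{k,n}}=X(Y(T_{k,n}))$, every term in $V_n^{(2r-1)}(f,t)$ of the form $\tfrac12(f(Z_{T_{k,n}})+f(Z_{T_{k+1,n}}))(Z_{T_{k+1,n}}-Z_{T_{k,n}})^{2r-1}$ is equal to one of two ``atoms'' attached to the grid interval $[j2^{-n/2},(j+1)2^{-n/2}]$ that is traversed at step $k$, according to whether that step is an upcrossing or a downcrossing of this interval.

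First I would set up the bookkeeping precisely. For a fixed step $k$ with $0\leq k\leq \lfloor 2^nt\rfloor-1$, let $j=j(k)$ be the integer such that the unordered pair $\{Y(T_{k,n}),Y(T_{k+1,n})\}$ equals $\{j2^{-n/2},(j+1)2^{-n/2}\}$. If the step is an upcrossing, then $Z_{T_{k,n}}=X_{j2^{-n/2}}$ and $Z_{T_{k+1,n}}=X_{(j+1)2^{-n/2}}$, so the contribution is
\[
\tfrac12\big(f(X_{j2^{-n/2}})+f(X_{(j+1)2^{-n/2}})\big)\big(X_{(j+1)2^{-n/2}}-X_{j2^{-n/2}}\big)^{2r-1}.
\]
If instead the step is a downcrossing, then $Z_{T_{k,n}}=X_{(j+1)2^{-n/2}}$ and $Z_{T_{k+1,n}}=X_{j2^{-n/2}}$; the symmetric-average factor $\tfrac12(f(\cdot)+f(\cdot))$ is unchanged under swapping the two endpoints, while the increment raised to the odd power $2r-1$ picks up a sign $(-1)^{2r-1}=-1$. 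Hence the downcrossing contribution is the negative of the upcrossing contribution. This is exactly where the oddness of the exponent $2r-1$ is used, and it is the structural reason the lemma holds for odd orders.

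Next I would sum over $k$ by first summing within each fixed value of $j$. By the definitions of $U_{j,n}(t)$ and $D_{j,n}(t)$, among the first $\lfloor 2^nt\rfloor$ steps there are exactly $U_{j,n}(t)$ upcrossings and $D_{j,n}(t)$ downcrossings of the interval $[j2^{-n/2},(j+1)2^{-n/2}]$. Since the atom attached to interval $j$ depends only on $j$ (through the fixed values $X_{j2^{-n/2}}$ and $X_{(j+1)2^{-n/2}}$) and not on which particular step realizes the crossing, all $U_{j,n}(t)$ upcrossings contribute the same positive atom and all $D_{j,n}(t)$ downcrossings contribute the same negative atom. Collecting these gives, for each $j$, the factor $(U_{j,n}(t)-D_{j,n}(t))$ multiplying the common atom, and summing over $j\in\Z$ yields precisely the claimed identity. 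I would finally note that the sum over $j$ is in fact finite almost surely, since within $\lfloor 2^nt\rfloor$ steps the random walk $\mathscr{Y}_n$ visits only finitely many points of $\mathscr{D}_n$, so only finitely many $j$ have $U_{j,n}(t)-D_{j,n}(t)\neq 0$; this makes the rearrangement of the sum legitimate with no convergence issues.

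I do not expect any genuine obstacle here: the content is entirely combinatorial/algebraic, and the only point requiring care is the sign bookkeeping in the up/down dichotomy together with the invariance of the symmetric average $\tfrac12(f+f)$ under endpoint swap. The substantive work of the paper lies downstream, in the asymptotic analysis of the representation furnished by this lemma; the lemma itself is just the exact rewriting that separates the roles of $X$ (through the grid atoms) and of $Y$ (through the crossing counts $U_{j,n}-D_{j,n}$).
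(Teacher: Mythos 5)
Your proof is correct. Note that the paper itself offers no proof of this lemma: it is quoted verbatim from Khoshnevisan and Lewis \cite[Lemma 2.4]{5}, with only the remark that it is ``easy''. Your argument --- regrouping the temporal sum by the grid interval crossed at each step, using that each step of the walk $\mathscr{Y}_n$ joins adjacent points of $\mathscr{D}_n$, that the symmetric average $\tfrac12(f+f)$ is invariant under swapping endpoints, and that the odd power $2r-1$ turns each downcrossing contribution into the negative of the upcrossing atom --- is precisely the standard proof of that cited lemma, and your closing observation that only finitely many $j$ contribute makes the rearrangement legitimate, so nothing is missing.
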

Observe that $V_n^{(1)}(f,t)=V_n(f,t)$, see (\ref{v}).

\subsection{\underline{Step 2}: Transforming the weighted power variations of odd order}

By \cite[Lemma 2.5]{5}, one has
\[
U_{j,n}(t) - D_{j,n}(t)= \left\{
\begin{array}{lcl}
 1_{\{0\leq j< j^*(n,t)\}} & &\mbox{if $j^*(n,t) > 0$}\\
 0 & &\mbox{if $j^{*} = 0$}\\
  -1_{\{j^*(n,t)\leq j<0\}} & &\mbox{if $j^*(n,t)< 0$}
  \end{array}
\right.,
\]
where $j^*(n,t)=2^{n/2}Y_{T_{\lfloor 2^n t\rfloor,n}}$.
As a consequence, $V_n^{(2r-1)}(f,t)$ is equal to
\begin{eqnarray*}
\left\{
\begin{array}{lcl}
2^{-nH(r-\frac12)}\sum_{j=0}^{j^*(n,t)-1}\frac12\big(f(X^+_{j2^{-n/2}}) + f(X^+_{(j+1)2^{-n/2}})\big) \big(X^{n,+}_{j+1}- X^{n,+}_j\big)^{2r-1} & &\mbox{if $j^*(n,t) > 0$}\\
  0 &&\mbox{if $j^{*} = 0$}\\
  2^{-nH(r-\frac12)}\sum_{j=0}^{|j^*(n,t)|-1} \frac12\big(f(X^-_{j2^{-n/2}}) + f(X^-_{(j+1)2^{-n/2}})\big) \big(X^{n,-}_{j+1}- X^{n,-}_j\big)^{2r-1} & &\mbox{if $j^*(n,t) < 0$}
  \end{array}
\right.,
\end{eqnarray*}
where $X^+_t := X_t$ for $t\geq 0$, $X^-_{-t} :=X_t$ for $t<0$, $X^{n,+}(t) := 2^{nH/2}X^+_{2^{-n/2}t}$ for $ t \geq 0$ and $X^{n,-}(-t) := 2^{nH/2}X^-_{2^{-n/2}(-t)}$ for $t < 0$.

Let us now
introduce the following sequence of processes $W_{\pm,n}^{(2r-1)}$, in which $H_p$ stands for the $p$th Hermite polynomial:
\begin{eqnarray}
W_{\pm,n}^{(2r-1)}(f,t)&=&
 \sum_{j=0}^{\lfloor 2^{n/2}t\rfloor -1} \frac12\big(f(X^\pm_{j2^{-\frac{n}{2}}}) + f(X^\pm_{(j+1)2^{-\frac{n}{2}}})\big) H_{2r-1}(X^{n,\pm}_{j+1}- X^{n,\pm}_{j}),\quad t \geq 0 \notag\\
 W_{n}^{(2r-1)}(f,t)&=&\left \{ \begin{array}{lc}
                      W_{+,n}^{(2r-1)}(f,t) &\text{if $t \geq 0$}\notag\\
                      W_{-,n}^{(2r-1)}(f,-t) &\text{if $t < 0$}
                      \end{array}
                      \right. .
\end{eqnarray}
We then have, using the decomposition
$x^{2r-1}=\sum_{l=1}^{r}a_{r,l}H_{2l-1}(x)$ (with $a_{r,r}=1$, which is the only explicit value of $a_{l,r}$ we will need in the sequel),
\begin{eqnarray}
V_n^{(2r-1)}(f,t)
= 2^{-nH(r-\frac12)}\sum_{l=1}^{r}a_{r,l}W_{n}^{(2l-1)}(f,Y_{T_{\lfloor 2^n t\rfloor,n}}).
\label{ref2}
\end{eqnarray}

\subsection{\underline{Step 3}: Known results for fractional Brownian motion}
We recall the following result taken\footnote{More precisely: a careful inspection  would show that there is no additional difficulty to prove (\ref{fdd}) by following the same route than the one used to show \cite[Theorem 1, (1.15)]{8}. The only difference is that the definition
of $W^{(r)}_{\pm,n}$ is of symmetric type, whereas all the quantities of interest studied in \cite{8} are of forward type.}   from \cite{8} .
If $m\geq 2$ and $H\in\big(\frac1{4m-2},\frac12\big)$ then,
for any $f \in C_b^{\infty}$ and as $n\to\infty$,
\begin{eqnarray}
\bigg( X_t, 2^{-n/4}W_{\pm,n}^{(2m-1)}(f,t)  \bigg)_{t \geq 0} \label{fdd}\overset{\rm fdd}{\longrightarrow} \bigg( X_t,\kappa_{2m-1}\int_0^t f(X^{\pm}_s)dW^{\pm}_s \bigg)_{t\geq 0},
\end{eqnarray}
where $W^{+}_t=W_t$ if $t>0$ and
$W^{-}_t=W_{-t}$ if $t<0$, with $W$ a two-sided Brownian motion independent of $X$, and where $\int_0^t f(X^{\pm}_s)dW^{\pm}_s$ must be  understood in the Wiener-It\^o sense.

Note that in the boundary case $m=2$ and $H=\frac16$, (\ref{fdd}) continues
to hold, as was shown in \cite[Theorem 3.1]{11}.

In the case $m=1$, it was shown in
\cite[Theorem 4]{8} (case $H>\frac16$) and
\cite[Theorem 2.13]{11} (case $H=\frac16$)
that, for any fixed $t>0$, the sequence $W_{\pm,n}^{(1)}(f,t)$ converges in probability (when $H>\frac16$) or only in law (when $H=\frac16$)  to a non degenerate limit as $n\to\infty$.

\subsection{\underline{Step 4}: Moment bounds for $W_n^{(2r-1)}(f,\cdot)$}
Fix an integer $r\geq 1$ as well as a function $f\in C^\infty_b$. We claim the existence of $c >0$   such that, for all real numbers $s<t$ and all $n\in\N$,
\begin{eqnarray}
E\big[ \big(W_{n}^{(2r-1)}(f,t) -W_{n}^{(2r-1)}(f,s) \big)^2\big]\label{dernierecomparaison}
&\leq& c\,\max(|s|^{2H},|t|^{2H})\big(|t-s|2^{n/2}+1\big).
\end{eqnarray}
In order to prove  (\ref{dernierecomparaison}), we will  need the following lemma.

\begin{lemma}\label{tech-lem}
If $s,t,u>0$ or if $s,t,u<0$ then
\begin{eqnarray}\label{12}
 |E\big(X_{u}(X_{t}-X_{s})\big)| \leq |t-s|^{2H}.
 \end{eqnarray}

\end{lemma}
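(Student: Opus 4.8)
The plan is to compute the covariance explicitly and then reduce everything to an elementary inequality for the concave function $x\mapsto x^{2H}$. First I would treat the case $s,t,u>0$. Expanding $E(X_uX_t)$ and $E(X_uX_s)$ by means of the covariance function $C_H$ gives
\[
E\big(X_u(X_t-X_s)\big)=\frac12\Big[\big(t^{2H}-s^{2H}\big)-\big(|u-t|^{2H}-|u-s|^{2H}\big)\Big].
\]

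Next I would record the key elementary fact that, because $H<\frac12$ (so that $0<2H<1$), the map $x\mapsto x^{2H}$ is subadditive on $[0,\infty)$, whence
\[
\big|a^{2H}-b^{2H}\big|\leq|a-b|^{2H},\qquad a,b\geq0.
\]
Applied with $(a,b)=(t,s)$, this bounds the first bracketed term above by $|t-s|^{2H}$. For the second term I would use the same inequality with $(a,b)=(|u-t|,|u-s|)$, then the reverse triangle inequality $\big||u-t|-|u-s|\big|\leq|t-s|$ together with the monotonicity of $x\mapsto x^{2H}$; this bounds the second bracketed term by $|t-s|^{2H}$ as well, with no case distinction on the position of $u$. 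Combining the two estimates through the triangle inequality then yields $|E(X_u(X_t-X_s))|\leq\frac12\big(|t-s|^{2H}+|t-s|^{2H}\big)=|t-s|^{2H}$.

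Finally, for the case $s,t,u<0$ I would invoke the symmetry of the two-sided fractional Brownian motion: since $C_H(-a,-b)=C_H(a,b)$, the process $(X_{-v})_{v\in\R}$ has the same law as $(X_v)_{v\in\R}$. Writing $s'=-s$, $t'=-t$, $u'=-u>0$, this gives $E(X_u(X_t-X_s))=E(X_{u'}(X_{t'}-X_{s'}))$, and since $|t'-s'|=|t-s|$ the bound follows from the positive case already treated. I do not expect a genuine obstacle here; the only delicate point is the handling of the second bracketed term, where the subadditivity inequality and the reverse triangle inequality must be combined in the right order so as to dispense with any case analysis on whether $u$ lies between $s$ and $t$.
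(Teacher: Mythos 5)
Your proof is correct and follows essentially the same route as the paper: the explicit covariance expansion for $s,t,u>0$, the subadditivity bound $|b^{2H}-a^{2H}|\leq|b-a|^{2H}$ (valid since $H<\frac12$) applied to both bracketed terms, and a symmetry reduction for the negative case. The only difference is one of detail: the paper states the subadditivity inequality and says the conclusion is immediate, whereas you spell out the intermediate steps (the reverse triangle inequality and the monotonicity of $x\mapsto x^{2H}$ for the term involving $u$), which is exactly what the paper leaves implicit.
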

\begin{proof}{ }
 When $s,t,u>0$ we have
 \[
 E\big( X_{u}(X_{t}-X_{s})\big) = \frac{1}{2}\big( t^{2H} -s^{2H}\big) + \frac{1}{2}\big(|s-u|^{2H} -|t-u|^{2H}\big).
 \]
Since   $|b^{2H} - a^{2H}|\leq |b-a|^{2H}$ for any $a, b \in \mathbb{R}_{+}$, we immediately deduce (\ref{12}).
The proof when $s,t,u<0$ is similar.

\end{proof}

We are now ready to show (\ref{dernierecomparaison}). We distinguish two cases according to the signs of $s,t \in \R$ (and reducing the problem by symmetry):

\begin{itemize}
\item[(1)] if  $0 \leq s < t$ (the case $s<t\leq 0$ being similar), then
\end{itemize}
\begin{eqnarray*}
 && E[(W_{n}^{(2r-1)}(f,t)- W_{n}^{(2r-1)}(f,s))^2]= E[(W_{+,n}^{(2r-1)}(f,t)-W_{+,n}^{(2r-1)}(f,s))^2]\\
&=& \frac14 \sum_{j,j'=\lfloor 2^{n/2}s \rfloor}^{\lfloor 2^{n/2}t\rfloor -1} \bigg|E \big[ \big(f(X^+_{j2^{-\frac{n}{2}}}) + f(X^+_{(j+1)2^{-\frac{n}{2}}})\big)\\
&& \times \big(f(X^+_{j'2^{-\frac{n}{2}}}) + f(X^+_{(j'+1)2^{-\frac{n}{2}}})\big)H_{2r-1}(X^{n,+}_{j+1}-X^{n,+}_{j})H_{2r-1}(X^{n,+}_{j'+1}-X^{n,+}_{j'}) \big]\bigg|\\
&=& \frac14 2^{nH(2r-1)}\sum_{j,j'=\lfloor 2^{n/2}s \rfloor}^{\lfloor 2^{n/2}t\rfloor -1} \bigg|E \big[
\Theta_j^n f(X^{+})\Theta_{j'}^n f(X^{+}) I_{2r-1}(\delta_{(j+1)2^{-n/2}}^{\otimes (2r-1)})I_{2r-1}(\delta_{(j'+1)2^{-n/2}}^{\otimes (2r-1)})\big]\bigg|,
\end{eqnarray*}
with obvious notation. Relying to the product formula (\ref{11}), we deduce that this latter quantity is less than or equal to
\begin{eqnarray}
&& \frac14 2^{nH(2r-1)}\sum_{j,j'= \lfloor 2^{n/2}s \rfloor}^{\lfloor 2^{n/2}t \rfloor -1}\sum_{l=0}^{2r-1}l!\binom{2r-1}{l}^2 \big|\langle \delta_{(j+1)2^{-n/2}}; \delta_{(j'+1)2^{-n/2}}\rangle \big|^l \notag\\
&& \hskip2.2cm \times \bigg| E\big[ \Theta_j^n f(X^+)\Theta_{j'}^n f(X^+)I_{4r-2-2l}(\delta_{(j+1)2^{-n/2}}^{\otimes (2r-1-l)}\tilde{\otimes}\delta_{(j'+1)2^{-n/2}}^{\otimes (2r-1-l)})
\big] \bigg|\notag\\
&=:& \frac14\sum_{l=0}^{2r-1} l! \binom{2r-1}{l}^2 Q_n^{(+,r,l)}(s,t). \label{T1}
\end{eqnarray}
By the duality formula (\ref{10}) and the Leibniz rule (\ref{Leibnitz1}), one has that
\begin{eqnarray*}
&& d_n^{(+ ,r,l)}(j,j'):=E\big[ \Theta_j^n f(X^+)\Theta_{j'}^n f(X^+)I_{4r-2-2l}(\delta_{(j+1)2^{-n/2}}^{\otimes (2r-1-l)}\tilde{\otimes}\delta_{(j'+1)2^{-n/2}}^{\otimes (2r-1-l)})
\big]\\
&\vspace{0.5cm}&\\
&=& E \big[ \big \langle D^{4r-2-2l}(\Theta_j^n f(X^+)\Theta_{j'}^n f(X^+))\: ; \: \delta_{(j+1)2^{-n/2}}^{\otimes (2r-1-l)}\tilde{\otimes}\delta_{(j'+1)2^{-n/2}}^{\otimes (2r-1-l)} \big \rangle \big]\\
&=& \sum_{a=0}^{4r-2-2l}\binom{4r-2-2l}{a} E\bigg[ \bigg \langle \big(f^{(a)}(X^+_{j2^{-n/2}})\xi_{j2^{-n/2}}^{\otimes a} + f^{(a)}(X^+_{(j+1)2^{-n/2}})\xi_{(j+1)2^{-n/2}}^{\otimes a} \big)\\
&& \tilde{\otimes} \big(f^{(4r-2-2l-a)}(X^+_{j'2^{-n/2}})\xi_{j'2^{-n/2}}^{\otimes (4r-2-2l-a)} + f^{(4r-2-2l-a)}(X^+_{(j'+1)2^{-n/2}})\xi_{(j'+1)2^{-n/2}}^{\otimes (4r-2-2l-a)} \big) \: ; \: \\
&& \hskip9.6cm \delta_{(j+1)2^{-n/2}}^{\otimes (2r-1-l)}\tilde{\otimes}\delta_{(j'+1)2^{-n/2}}^{\otimes (2r-1-l)} \bigg \rangle \bigg].
\end{eqnarray*}
Let now $c$ denote a generic constant that may differ from one line to another and recall that $f \in C_{b}^{\infty}$. We then have the following estimates.

\begin{itemize}
\item \underline{Case $l=2r-1$}
\end{itemize}
\begin{eqnarray}
&&Q_n^{(+,r,2r-1)}(s,t)\notag\\
 &\leq & c\,2^{nH(2r-1)}\sum_{j,j'= \lfloor 2^{n/2}s\rfloor }^{\lfloor 2^{n/2}t \rfloor -1} \big|\langle \delta_{(j+1)2^{-n/2}}; \delta_{(j'+1)2^{-n/2}}\rangle \big|^{2r-1} \notag \\
&=& c \sum_{j,j'= \lfloor 2^{n/2}s\rfloor}^{\lfloor 2^{n/2}t \rfloor -1}
\big|\frac12( |j-j'+1|^{2H} + |j-j'-1|^{2H} - 2 |j-j'|^{2H}) \big|^{2r-1}\notag \\
&=&  c \sum_{j = \lfloor 2^{n/2}s\rfloor }^{\lfloor 2^{n/2}t \rfloor -1}\sum_{q =j - \lfloor 2^{n/2}t \rfloor + 1}^{j - \lfloor 2^{n/2}s\rfloor } \big|\rho(q) \big|^{2r-1},\notag
\end{eqnarray}
with $\rho(q):= \frac12( |q+1|^{2H} + |q-1|^{2H} - 2 |q|^{2H})$. By a Fubini argument, it comes
\begin{eqnarray}
&&Q_n^{(+,r,2r-1)}(s,t)\notag\\
&\leq& c \!\!\!\sum_{q = \lfloor 2^{n/2}s \rfloor  - \lfloor 2^{n/2}t \rfloor +1}^{\lfloor 2^{n/2}t \rfloor - \lfloor 2^{n/2}s \rfloor  -1} \!\!\!\!\!|\rho(q)|^{2r-1} \bigg( (q+ \lfloor 2^{n/2}t \rfloor)\wedge
\lfloor 2^{n/2}t \rfloor - ( q+ \lfloor 2^{n/2}s\rfloor )\vee \lfloor 2^{n/2}s\rfloor \bigg)\notag\\
&\leq&  c  \sum_{q = \lfloor 2^{n/2}s \rfloor  - \lfloor 2^{n/2}t \rfloor +1}^{\lfloor 2^{n/2}t \rfloor - \lfloor 2^{n/2}s \rfloor  -1} |\rho(q)|^{2r-1}\big( \lfloor 2^{n/2}t \rfloor - \lfloor 2^{n/2}s \rfloor \big)\notag\\
&\leq&c \sum_{q \in \Z}|\rho(q)|^{2r-1}\big|\lfloor 2^{n/2}t \rfloor - \lfloor 2^{n/2}s \rfloor \big|=c\big|\lfloor 2^{n/2}t \rfloor - \lfloor 2^{n/2}s \rfloor \big|\notag\\
& \leq& c\big(\big|\lfloor 2^{n/2}t \rfloor -2^{n/2}t \big| + 2^{n/2}\big|t-s\big| +  \big|\lfloor 2^{n/2}s \rfloor -2^{n/2}s\big|\big) \notag\\
&\leq& c ( 1 + 2^{n/2}|t-s|). \label{T2}
\end{eqnarray}
Note that $\sum_{q \in \Z}|\rho(q)|^{2r-1} < \infty$ since $H<\frac12\leq 1-\frac{1}{4r-2}$.

\begin{itemize}
\item \underline{Preparation to the cases where $ 0 \leq l \leq 2r-2$}
\end{itemize}

In order to handle the terms $Q_n^{(+,r,l)}(s,t)$  whenever $ 0 \leq l \leq 2r-2$, we will make use of the following decomposition:
\begin{equation}
|d_n^{(+, r,l)}(j,j')| \leq \sum_{u,v=0}^1  \Omega^{(u,v,r,l)}_n(j,j'),\label{T3}
\end{equation}
where
\begin{eqnarray*}
 \Omega^{(u,v,r,l)}_n(j,j') &=& \sum_{a=0}^{4r-2-2l}\binom{4r-2-2l}{a} \big| E[f^{(a)}(X^+_{(j+u)2^{-n/2}})f^{(4r-2-2l-a)}(X^+_{(j'+v)2^{-n/2}})]\big| \\
 &&\times\big| \big \langle \xi_{(j+u)2^{-n/2}}^{\otimes a}\tilde{\otimes}\xi_{(j'+v)2^{-n/2}}^{\otimes (4r-2-2l-a)}; \delta_{(j+1)2^{-n/2}}^{\otimes (2r-1-l)}\tilde{\otimes}\delta_{(j'+1)2^{-n/2}}^{\otimes (2r-1-l)}\big \rangle \big|.
 \end{eqnarray*}

\begin{itemize}
\item \underline{Case $ 1\leq l \leq 2r-2$} (only when $r\geq 2$)
\end{itemize}

 Since $f$ belongs to $C_b^{\infty}$ and since, by (\ref{12}), we have
 $|\langle \xi_t ; \delta_{(j+1)2^{-n/2}} \rangle| \leq 2^{-nH}$ for all $ t \geq 0$ and all $ j \in \N$,  we deduce that
 \[
 |d_n^{(+ ,r,l)}(j,j')| \leq c\,2^{-nH(4r-2-2l)}.
 \]
 As a consequence, and relying to the same arguments that have been used previously in the case $l =2r-1$, we get
 \begin{eqnarray}
 Q_n^{(+,r,l)}(s,t)
 &\leq& c \,2^{-nH(4r-2-2l)}\: 2^{nH(2r-1)}\sum_{j,j'= \lfloor 2^{n/2}s\rfloor }^{\lfloor 2^{n/2}t \rfloor -1} \big|\langle \delta_{(j+1)2^{-n/2}}; \delta_{(j'+1)2^{-n/2}}\rangle \big|^l \notag\\
 & \leq& c\, 2^{-nH(4r-2-2l)}\: 2^{nH(2r-1)} 2^{-nHl} \sum_{q \in \Z}|\rho(q)|^l ( 1 + 2^{n/2}|t-s|)\notag\\
 & =& c\: 2^{-nH(2r-1-l)}(1 + 2^{n/2}|t-s|)\leq c\: (1 + 2^{n/2}|t-s|). \label{T4}
 \end{eqnarray}

\begin{itemize}
\item \underline{Case $ l=0$}
\end{itemize}
Relying to the decomposition (\ref{T3}), we get
 \begin{eqnarray}
   Q_n^{(+,r,0)}(s,t) &\leq&  2^{nH(2r-1)}\sum_{j,j'= \lfloor 2^{n/2}s\rfloor }^{\lfloor 2^{n/2}t \rfloor -1}
\sum_{u,v=0}^1
   \Omega^{(u,v,r,0)}_n(j,j') \label{omega2}.
 \end{eqnarray}

 We will study only the term corresponding to
 $\Omega^{(0,1,r,0)}_n(j,j') $  in (\ref{omega2}), which is representative of the difficulty.
  It is given by
 \begin{eqnarray}
 &&2^{nH(2r-1)}\sum_{j,j'= \lfloor 2^{n/2}s\rfloor}^{\lfloor 2^{n/2}t \rfloor -1} \sum_{a=0}^{4r-2}\binom{4r-2}{a} \big| E[f^{(a)}(X^+_{j2^{-n/2}})f^{(4r-2-a)}(X^+_{(j'+1)2^{-n/2}})]\big|\notag\\
 && \hskip6cm \times \big| \big \langle \xi_{j2^{-n/2}}^{\otimes a}\tilde{\otimes}\xi_{(j'+1)2^{-n/2}}^{\otimes (4r-2-a)}; \delta_{(j+1)2^{-n/2}}^{\otimes (2r-1)}\tilde{\otimes}\delta_{(j'+1)2^{-n/2}}^{\otimes (2r-1)}\big \rangle \big|\notag\\
 &\leq &  c\, 2^{nH(2r-1)}\sum_{j,j'= \lfloor 2^{n/2}s\rfloor }^{\lfloor 2^{n/2}t \rfloor -1} \sum_{a=0}^{4r-2}\big| \big \langle \xi_{j2^{-n/2}}^{\otimes a}\tilde{\otimes}\xi_{(j'+1)2^{-n/2}}^{\otimes (4r-2-a)};  \notag
  \delta_{(j+1)2^{-n/2}}^{\otimes (2r-1)}\tilde{\otimes}\delta_{(j'+1)2^{-n/2}}^{\otimes (2r-1)}\big \rangle \big|.
 \end{eqnarray}
We define $E_n^{(a,r)}(j,j'):= \big| \big \langle \xi_{j2^{-n/2}}^{\otimes a}\tilde{\otimes}\xi_{(j'+1)2^{-n/2}}^{\otimes (4r-2-a)}; \delta_{(j+1)2^{-n/2}}^{\otimes (2r-1)}\tilde{\otimes}\delta_{(j'+1)2^{-n/2}}^{\otimes (2r-1)}\big \rangle \big|.$
By (\ref{12}), recall that $|\langle \xi_t ; \delta_{(j+1)2^{-n/2}} \rangle| \leq 2^{-nH}$ for all $ t \geq 0$ and all $ j \in \N$. We thus get, with $\tilde{c}_a$  some combinatorial constants,
  \begin{eqnarray*}
  E_n^{(a,r)}(j,j')  &\leq& \tilde{c}_a\,2^{-nH(4r-3)}
  \big(
   |\langle \xi_{j2^{-n/2}}; \delta_{(j+1)2^{-n/2}}\rangle|
  + | \langle \xi_{j2^{-n/2}}; \delta_{(j'+1)2^{-n/2}}\rangle|\\
&&  \hskip2.5cm+  |\langle \xi_{(j'+1)2^{-n/2}}; \delta_{(j+1)2^{-n/2}}\rangle|
  + | \langle \xi_{(j'+1)2^{-n/2}}; \delta_{(j'+1)2^{-n/2}}\rangle|
  \big).
  \end{eqnarray*}
For instance,
  we can write
  \begin{eqnarray*}
 && \sum_{j,j'=\lfloor 2^{n/2}s\rfloor}^{\lfloor 2^{n/2}t\rfloor -1}
 | \langle \xi_{(j'+1)2^{-n/2}}; \delta_{(j+1)2^{-n/2}}\rangle|\\
&  =&2^{-nH-1} \sum_{j,j'=\lfloor 2^{n/2}s\rfloor}^{\lfloor 2^{n/2}t\rfloor -1}
 \big|
 (j+1)^{2H}-j^{2H}+|j'-j+1|^{2H}-|j'-j|^{2H}
 \big|\\
 &\leq& 2^{-nH-1} \sum_{j,j'=\lfloor 2^{n/2}s\rfloor}^{\lfloor 2^{n/2}t\rfloor -1}
 \big((j+1)^{2H}-j^{2H}\big)\\
 &&+
 2^{-nH-1} \sum_{\lfloor 2^{n/2}s\rfloor\leq j\leq j'\leq \lfloor 2^{n/2}t\rfloor-1}
 \big((j'-j+1)^{2H}-(j'-j)^{2H}\big)\\
  &&+
 2^{-nH-1} \sum_{\lfloor 2^{n/2}s\rfloor\leq j'<j\leq \lfloor 2^{n/2}t\rfloor-1}
 \big((j-j')^{2H}-(j-j'-1)^{2H}\big)\\
 &\leq&\frac32\,2^{-nH}\big(\lfloor 2^{n/2}t\rfloor - \lfloor 2^{n/2}s\rfloor\big) \lfloor 2^{n/2}t\rfloor ^{2H} \leq \frac{3t^{2H}}2\,\big(1+2^{n/2}|t-s|\big).
  \end{eqnarray*}
  Similarly,
  \begin{eqnarray*}
   \sum_{j,j'=\lfloor 2^{n/2}s\rfloor}^{\lfloor 2^{n/2}t\rfloor -1}
  \langle \xi_{j2^{-n/2}}; \delta_{(j+1)2^{-n/2}}\rangle|&\leq&\frac{3t^{2H}}2\,\big(1+2^{n/2}|t-s|\big);\\
   \sum_{j,j'=\lfloor 2^{n/2}s\rfloor}^{\lfloor 2^{n/2}t\rfloor -1}
  \langle \xi_{j2^{-n/2}}; \delta_{(j'+1)2^{-n/2}}\rangle|&\leq&\frac{3t^{2H}}2\,\big(1+2^{n/2}|t-s|\big);\\
     \sum_{j,j'=\lfloor 2^{n/2}s\rfloor}^{\lfloor 2^{n/2}t\rfloor -1}
  \langle \xi_{(j'+1)2^{-n/2}}; \delta_{(j'+1)2^{-n/2}}\rangle|&\leq&\frac{3t^{2H}}2\,\big(1+2^{n/2}|t-s|\big).
  \end{eqnarray*}
  As a consequence, we deduce
  \begin{equation}
  Q_n^{(+,r,0)}(s,t)\leq  c\,2^{-nH(2r-2)} t^{2H}\big(2^{n/2}|t-s|+1)\leq  c\,t^{2H}\big(2^{n/2}|t-s|+1). \label{T5}
  \end{equation}

Combining (\ref{T1}), (\ref{T2}), (\ref{T4}) and (\ref{T5}) finally shows our claim (\ref{dernierecomparaison}).

\begin{itemize}

\item[(2)] if $s < 0 \leq t$, then
\end{itemize}
 \begin{eqnarray*}
&&E[(W_{n}^{(2r-1)}(f,t)- W_{n}^{(2r-1)}(f,s))^2]= E[(W_{+,n}^{(2r-1)}(f,t)- W_{-,n}^{(2r-1)}(f,-s))^2]\\
& \leq & 2E[(W_{+,n}^{(2r-1)}(f,t))^2] + 2E[(W_{-,n}^{(2r-1)}(f,-s))^2].
\end{eqnarray*}
By (1) with $s=0$, one can write
 \begin{eqnarray*}
 && E[(W_{+,n}^{(2r-1)}(f,t))^2]\leq c\,t^{2H}(t2^{n/2}+1).
\end{eqnarray*}
Similarly
\begin{eqnarray*}
 && E[(W_{-,n}^{(2r-1)}(f,-s))^2]\leq c(-s)^{2H}\big((-s)2^{n/2}+1\big)
\end{eqnarray*}
We deduce that
\begin{eqnarray*}
&&E[(W_{n}^{(2r-1)}(f,t)- W_{n}^{(2r-1)}(f,s))^2] \leq c \max(t^{2H},(-s)^{2H})\big((t-s)2^{n/2}+1\big).
\end{eqnarray*}
That is, (\ref{dernierecomparaison}) also holds true in this case.

\subsection{\underline{Step 5}: Limits of the weighted power variations of odd order}
Fix $f \in C_b^{\infty}$ and $t\geq 0$. We claim that, if $H\in\big[\frac16,\frac12\big)$ and $r\geq 3$ then, as $n\to\infty$,
\begin{equation}\label{proposition2}
V_n^{(2r-1)}(f,t) \overset{\rm prob}{\longrightarrow}  0.
\end{equation}
Moreover, if $H\in\big(\frac16,\frac12\big)$ then, as $n\to\infty$,
\begin{equation}\label{proposition2bis}
V_n^{(3)}(f,t)  \overset{\rm prob}{\longrightarrow}  0,
\end{equation}
whereas, if $H=\frac16$ then, as $n\to\infty$,
\begin{eqnarray}
\left(
X_t,Y_t,V_n^{(3)}(f,t)
\right)_{t\geq 0}\overset{\rm fdd}{\to}
\left(
X_t,Y_t,\kappa_3\int_0^{Y_t}f(X_s)dW_s
\right)_{t\geq 0},\label{cubic}
\end{eqnarray}
with $W=(W_t)_{t\in \R}$ a two-sided Brownian motion independent of the pair $(X,Y)$.

Indeed, using the decomposition (\ref{ref2}), the conclusion of Step 4 (to pass from $Y_{T_{\lfloor 2^n t\rfloor,n}}$ to $Y_t$)  and the convergence (\ref{lemma2.2}), we deduce that the limit
of
$V_n^{(2r-1)}(f,t) $
is the same as that of
\[
2^{-nH(r-\frac12)}\sum_{l=1}^{r}a_{r,l}W_{n}^{(2l-1)}(f,Y_t).
\]

Thus, the proofs of (\ref{proposition2}), (\ref{proposition2bis}) and (\ref{cubic}) then follow directly from  the results recalled in Step 3, as well as the fact that $X$ and $Y$ are independent.

\subsection{\underline{Step 6}: Proving (\ref{Ito1}) and (\ref{Ito2})}

We assume $H\in[\frac16,\frac12)$.
We will make use of the following Taylor's type formula.  Fix $f \in C_b^{\infty}$. For any $a,\: b \in \mathbb{R}$
and for some constants $c_r$ whose explicit values are immaterial here,
  \begin{eqnarray}
f(b) - f(a) \notag
 &=& \frac12\big(f'(a)+f'(b)\big)(b-a) -
 \frac1{24}\big(f'''(a)+f'''(b)\big)(b-a)^3 \\
&&+\sum_{r=3}^{7} c_{r}
 \big(f^{(2r-1)}(a)+f^{(2r-1)}(b)\big)(b-a)^{2r-1}
 + O ( |b-a|^{14}),\notag
  \end{eqnarray}
 where $|O ( |b-a|^{14})| \leq C_{f}|b-a|^{14}$, $C_f$ being a constant depending only on $f$.
One can thus write
\begin{eqnarray}
f(Z_{T_{\lfloor 2^n t\rfloor,n}}) - f(0)&=&\sum_{k=0}^{\lfloor 2^{n}t\rfloor -1}\!\!\!\big( f(Z_{T_{k+1,n}}) - f(Z_{T_{k,n}})\big)\notag\\
&\!\!\!=&
\sum_{k=0}^{\lfloor 2^n t\rfloor-1}\frac12\big(f(Z_{T_{k,n}})+f(Z_{T_{k+1,n}})\big) (Z_{T_{k+1,n}}- Z_{T_{k,n}})\label{pm3}\\
&&-\frac1{12}
V_n^{(3)}(f,t)+
\sum_{r=3}^{7}2c_rV_n^{(2r-1)}(f,t)+ \sum_{k=0}^{\lfloor 2^n t \rfloor -1}O ( (Z_{T_{k+1,n}}-Z_{T_{k,n}})^{14})\notag.
\end{eqnarray}

As far as the big $O$ in (\ref{pm3}) is concerned, we have, with $G\sim N(0,1)$,
 \begin{eqnarray}
&& E\big[ \big|\sum_{k=0}^{\lfloor 2^n t \rfloor -1}O ( (Z_{T_{k+1,n}}-Z_{T_{k,n}})^{14})\big|\big] \leq C_f \sum_{k=0}^{\lfloor 2^n t \rfloor -1}E\big[(Z_{T_{k+1,n}}-Z_{T_{k,n}})^{14}\big]\notag\\
& =& C_f \sum_{k=0}^{\lfloor 2^n t \rfloor -1} 2^{-7nH}E[G^{14}] \leq C_f E[G^{14}] t\,2^{n(1-7H)} \to_{n\to\infty} 0\quad\mbox{since $H\geq \frac16$}.\label{pm2}
\end{eqnarray}

On the other hand, by continuity of $f\circ Z$  and due to (\ref{lemma2.2}), one has, almost surely and
as $n\to\infty$,
\begin{equation}\label{pm1}
 f(Z_{T_{\lfloor 2^n t\rfloor,n}}) - f(0)
\to f(Z_t)-f(0).
\end{equation}

Finally, when $H>\frac16$ the desired conclusion (\ref{Ito1}) follows from
(\ref{pm2}), (\ref{pm1}), (\ref{proposition2}) and (\ref{proposition2bis}) plugged into (\ref{pm3}).
The proof of (\ref{Ito2}) when $H=\frac16$ is similar, the only difference being that one has (\ref{cubic}) instead of (\ref{proposition2bis}), thus leading
to the bracket term $\frac{\kappa_3}{12}\int_0^{Y_{t}}f'''(X_s)dW_s$ in (\ref{Ito2}).

\subsection{\underline{Step 7}: Proving (\ref{Ito3})}

Using $b^3-a^3=\frac32(a^2+b^2)(b-a)-\frac12(b-a)^3$, one can write, with ${\bf 1}$ denoting the function constantly equal to 1,
\begin{eqnarray*}
V_n(x\mapsto x^2,t)-\frac13 Z_t^3
&=&\frac16 V_n^{(3)}({\bf 1},t)+\frac13\sum_{k=0}^{\lfloor 2^n t\rfloor -1}
(Z_{T_{k+1,n}}^3- Z_{T_{k,n}}^3)
-\frac13 Z_t^3\\
&=&\frac16 V_n^{(3)}({\bf 1},t)+\frac13
(Z_{T_{\lfloor 2^n t\rfloor ,n}}^3- Z_t^3).
\end{eqnarray*}
As a result, and thank to (\ref{lemma2.2}), one deduces that
if $V_n(x\mapsto x^2,t)$ converges stably in law, then
$V_n^{(3)}({\bf 1},t)$ must converge as well.
But
it is shown in \cite[Corollary 1.2]{13} that
$2^{-n(1-6H)/4}V_n^{(3)}({\bf 1},t)$ converges in law
to a non degenerate limit.
This being clearly in contradiction with the convergence of $V_n^{(3)}({\bf 1},t)$, we deduce that (\ref{Ito3}) holds.

\end{document}